\newcommand{\PP}{{\mathbb P}}
\newcommand{\ZZ}{{\mathbb Z}}
\newcommand{\ls}{{\mathcal{L}}}
\newcommand{\Nn}{{\rm{N}}}
\DeclareMathOperator{\Pic}{Pic}
\DeclareMathOperator{\Eff}{Eff}
\DeclareMathOperator{\cone}{cone}
\def\Bl{\operatorname{Bl}}
\newcommand{\paper}{: \begin{it}}
\newcommand{\jour }{, \end{it}}
\newtheorem{theorem}{Theorem}[section]
\newtheorem{lemma}[theorem]{Lemma}
\newtheorem{corollary}[theorem]{Corollary}
\theoremstyle{definition}
\newtheorem{definition}[theorem]{Definition}
\theoremstyle{remark}
\newtheorem{remark}[theorem]{Remark}
\numberwithin{equation}{section}
\begin{document}

\title{Cones of effective divisors on the blown-up $\PP^3$ in general lines}

\author{Olivia Dumitrescu}
\email{{\tt  dumitrescu@mpim-bonn.mpg.de }}
\address{Max-Planck Institute for Mathematics, Vivatsgasse 7, 53111 Bonn, Germany}

\author{Elisa Postinghel}
\email{{\tt elisa.postinghel@wis.kuleuven.be}}
\address{KU Leuven, Department of Mathematics, Celestijnenlaan 200B, 3001 Heverlee,
Belgium}

\author{Stefano Urbinati}
\email{{\tt urbinati@math.unipd.it}}
\address{Universita' degli Studi di Padova, Dipartimento di Matematica, Via Trieste 63, 35121 Padova, Italy}

\thanks{The first author is a member of 
the Simion Stoilow Institute of Mathematics of the 
Romanian Academy. The second author is supported by the Research Foundation - Flanders (FWO). The third author is supported by the PISCOPIA cofund Marie Curie Fellowship Programme.}

\vspace{-0.8cm}

\begin{abstract}
We compute the facets of the effective cones of divisors on the blow-up of $\PP^3$ in up to
 five lines in general position. We prove that up to six lines these threefolds are weak Fano and hence Mori Dream Spaces.
\end{abstract}

\maketitle

\section{Introduction}

In classical algebraic geometry, the study of linear systems in $\PP^n$ 
of hypersurfaces of degree $d$ with prescribed multiplicities at a collection of $s$ points in
 general position was investigated by many authors for over a century
(see \cite{Ciliberto} for an overview). 

We will briefly recall the most important results in this area. The well-known 
Alexander-Hirschowitz  theorem \cite{AlHi} classifies completely the dimensionality problem
 for linear systems with double points (see \cite{ale-hirsch, Ch1, Ch2, Po} for more recent and simplified 
proofs). Besides this theorem, general results about linear systems are rare and few things are known. 
For arbitrary number of points in the projective space  $\PP^n$, the dimensionality problem for linear systems
was analysed in the articles \cite{BDP1,BDP3,DumPos}. 
On the one hand, linear systems can be studied via techniques of commutative algebra. 
In fact, the dimensionality problem is related via apolarity to the Fr\"oberg-Iarrobino 
Weak and Strong conjectures \cite{Froberg, Iarrobino}. 
These conjectures give a predicted value for the Hilbert series of an  ideal generated by 
$s$ general powers of linear forms  in the polynomial ring with $n+1$ variables. 
On the other hand, linear systems correspond in birational geometry to the space 
of global sections of line bundles on the blown-up space $\PP^n$ in points. 
Form this perspective, the dimensionality problem of such linear systems reduces to vanishing
 theorems of divisors on blown-up spaces in points and along higher dimensional
cycles of the base locus. Vanishing theorems are important in algebraic geometry because they are related to positivity properties of divisors. In this direction, there are 
positivity conjectures in birational geometry that involve complete understanding of cohomology groups 
of divisors (see \cite{DP} for an introduction). 

 For a small number of points, namely whenever $s\leq n+3$, understanding the cohomology of
 divisors on the blown-up $\PP^n$ in $s$ points is easier since this space is a Mori dream space, see \cite[Theorem 1.3]{CT}.
A projective variety $X$ is called a Mori dream space if its Cox ring is finitely generated. The Cox ring of an algebraic variety naturally generalizes the homogeneous coordinate ring of the projective space.
 Mori dream spaces are generalizations of toric varieties that have a polynomial Cox ring and their geometry can be encoded by combinatorial data. For example, Mori dream spaces have rational polyhedral effective cone.
 In other words, the cone of effective divisors can be described as intersection of
 half-spaces in  $\Nn^1(X)_\mathbb{R}\cong\mathbb{R}^m$, the N\'eron-Severi
 group of $X$ tensored with the real numbers.

Motivated by the study of the classical interpolation problems in $\PP^n$ started by the Italian 
school of algebraic geometry,
 in this article we study the blown-up projective space $\PP^3$ in a collection of $s$ lines in
 general position $l_1, \ldots, l_s$. 
We give a description of the cone of effective divisors whenever $s$ is bounded above by five. 
The technique for finding the facets of the effective cone was first developed for
 blown-up $\PP^n$ in $n+3$ points in \cite{BDP3}.
 
Moreover, in this article we prove that these threefolds are weak Fano, hence Mori dream spaces, for
a number of lines bounded above by six. We also prove that for $s \geq 7$ these threefolds are not weak Fano.

These notes are organized as follows. In Section \ref{preliminary} we introduce the 
notation that will be used throughout.
In Section \ref{base locus} we describe cycles of the base locus of linear systems of
divisors on the blown-up  space in a collection of lines in general position. 
In section \ref{section:weakfano} we prove the first main result of these notes, i.e. we bound the number of blown-up general lines to obtain a weak Fano threefolds.
Section  \ref{effective cones}  contains the second main result of these notes, a complete description of the effective cone of the blown-up  projective space  at up to five general lines.

\subsection*{Acknowledgments} 
The authors would like to thank the organizers of the workshop 
``Recent advances in Linear series and Newton-Okounkov bodies'', Universit\`a degli Studi di Padova, 2015,
for their hospitality and financial support. 
This collaboration and project was started there. 
We would also like to thank M. Bolognesi and M. C. Brambilla for helpful conversations.
We thank the referee for useful comments and suggestions on the first version of this manuscript.

\section{Preliminaries and notation}\label{preliminary}
Let $X_s:=\Bl_s\PP^3$ be the blow-up of $\PP^3$ along $s$ lines $l_1,\dots, l_s$ 
in general position 
and let $E_1,\ldots,E_s$ denote the corresponding exceptional divisors. 
 We use $H$ to denote the class of the pull-back of a general hyperplane in $\Bl_s\PP^3$.
 The Picard group
$\Pic (\Bl_s \PP^3)$ is spanned by the hyperplane class $H$ and the exceptional divisors $E_i$. 

\begin{remark}\label{hyperplane}
Notice that the hyperplane class $H$ is a blown-up projective plane at $s$ points, 
with exceptional divisors $e_i=E_i|_H$, $i=1,\dots,s$, and line class $h:=H|_H$. 
Each exceptional divisor $E_i$ is a Hirzebruch surface ${\mathbb F}_0$, isomorphic
 to  $\PP^1\times \PP^1$, and the restriction of the hyperplane class
 $H|_{E_i}$ is the class of a fiber.
\end{remark}

Let 
\begin{equation}\label{divisor}
D=dH-\sum_{i=1}^sm_iE_i,
\end{equation}
 with $d,m_1,\dots,m_s\in\ZZ$, be any line bundle  in $\Pic(X_s)$.

\begin{remark}\label{l.s. in P^3}
If $d,m_i\ge0$, $|D|$ corresponds to the linear system $\ls_{d}(m_1,\dots,m_s)$ of degree-$d$ surfaces of $\PP^3$ 
that have multiplicity at least $m_i$ along $l_i$, for $i\in\{1,\dots,s\}$.
\end{remark}

\begin{remark}\label{rmk:log fano}
The canonical divisor of the blown-up projective space $\PP^3$ in $s$ lines is
\begin{equation}\label{canonical}
K_{\Bl_s\PP^3}=-4H+\sum_{i=1}^sE_i.
\end{equation}
\end{remark}

\section{Base locus lemmas}\label{base locus}

\begin{lemma}\label{base locus for quadric}
Let $\ls_d(m_1,\ldots,m_s)$ be the linear system  interpolating $s$ lines with multiplicities $m_i$. If $l_i,l_j,l_k\subset\PP^3$, with $i, j, k \in \{1, \ldots, s\}$, are three lines in general position, then  there exists a (unique) smooth quadric surfaces $Q_{ijk}$ in $\PP^3$ containing the lines $l_i,l_j,l_k$. Moreover such a quadric splits off $\ls_d(m_1,\ldots,m_s)$ at least $k_{ijk}:=\max(m_i+m_j+m_p-d,0)$ times.

\end{lemma}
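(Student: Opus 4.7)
The plan is to first establish existence and uniqueness of the smooth quadric, then use the fact that such a quadric is a $\PP^1\times\PP^1$ with all three lines in the same ruling to obtain the divisibility statement by restriction and induction.

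For the first assertion, I would note that three lines in general position in $\PP^3$ are pairwise skew. A standard dimension count shows that containing a line imposes $3$ independent conditions on the $9$-dimensional linear system of quadrics in $\PP^3$, so three pairwise skew lines impose at most $9$ conditions and therefore a quadric containing them exists. Such a quadric cannot be reducible (two planes would force two of the three lines to meet) nor a cone (the vertex would have to lie on each of the three skew lines), so it is smooth, and the $9$ conditions are in fact independent, giving uniqueness. Under the isomorphism $Q_{ijk}\cong\PP^1\times\PP^1$ (Remark~\ref{hyperplane}) the three lines, being mutually skew, must lie in a single ruling, say of class $(1,0)$.

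For the divisibility, let $S$ be a surface of degree $d$ in $\ls_d(m_1,\dots,m_s)$, so $S$ vanishes along $l_i, l_j, l_k$ with multiplicities $m_i, m_j, m_k$ respectively. Assume first that $Q:=Q_{ijk}$ is not a component of $S$. Then the scheme-theoretic intersection $S\cap Q$ is an effective divisor on $Q$ of bidegree $(d,d)$ which contains $l_i+l_j+l_k$ (each in class $(1,0)$) with multiplicities at least $m_i,m_j,m_k$. The residual divisor therefore has bidegree
\[
(d-m_i-m_j-m_k,\ d),
\]
and for this to be effective on $\PP^1\times\PP^1$ one needs $d\geq m_i+m_j+m_k$. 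Consequently, if $k_{ijk}=m_i+m_j+m_k-d>0$, the quadric $Q$ must be a component of $S$, i.e.\ the defining equation of $Q$ divides that of $S$.

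Writing $S=Q\cdot S'$, the residual surface $S'$ has degree $d-2$; since $Q$ contains $l_i, l_j, l_k$ each with multiplicity exactly one and contains no other $l_p$ (as $l_p$ is in general position with respect to the triple $l_i, l_j, l_k$), the surface $S'$ lies in $\ls_{d-2}(m_1,\dots, m_i-1,\dots, m_j-1,\dots, m_k-1,\dots,m_s)$. The new excess is
\[
(m_i-1)+(m_j-1)+(m_k-1)-(d-2)=k_{ijk}-1,
\]
so the same argument applies to $S'$ as long as this quantity is positive. Iterating $k_{ijk}$ times yields the claim. The main subtlety is verifying that the residual lives in the expected linear system (i.e.\ that $Q$ meets each $l_p$ with $p\notin\{i,j,k\}$ in only finitely many points and hence does not lower $m_p$), which uses the general position hypothesis; everything else is a mechanical consequence of the bidegree bookkeeping on $\PP^1\times\PP^1$.
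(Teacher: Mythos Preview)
Your proof is correct and follows essentially the same line as the paper's. Both arguments identify $Q_{ijk}\cong\PP^1\times\PP^1$ with the three lines in one ruling and force $Q_{ijk}$ into the base locus whenever $m_i+m_j+m_k>d$; the paper phrases this via B\'ezout with lines of the opposite ruling, while you phrase it via bidegree bookkeeping on $\PP^1\times\PP^1$, and both conclude by the identical induction in which $k_{ijk}$ drops by one after peeling off a copy of $Q_{ijk}$.
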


\begin{proof} It is enough to prove the claim for $s=3$, and we will denote the lines by $l_1, l_2 , l_3$.
One can easily compute that $h^0(\PP^3,\ls_{2}(1,1,1))=1$; the quadric is the unique 
element of this linear system and it is isomorphic to 
$\PP^1\times\PP^1$.
By generality, $l_1,l_2,l_3\subset\PP^3$ belong to the same ruling of $Q_{123}$. Notice that each element of $\ls_d(m_1,m_2,m_3)$ intersects each line of the other ruling of the quadric at least $m_1+m_2+m_3$ times.
Therefore, if $m_1+m_2+m_3>d$, the quadric $Q_{123}$ is in the base locus of $\ls_d(m_1,m_2,m_3).$ 
To compute the multiplicity of containment one can check that in the residual linear system, $\ls_d(m_1,m_2,m_3)-Q_{123}$, the integer $k_{123}$ drops by $1$
and this concludes the proof. 
\end{proof}

\begin{lemma}\label{base locus for transversals}

Let $\ls_d(m_1,\ldots, m_s)$ as in Lemma \ref{base locus for quadric}. 
Any collection of four general lines $l_i, l_j, l_k, l_{\iota}$ in $\PP^3$ determines  two \emph{transversal} lines $t,t'$. For any $s\geq 4$ these transversals are contained in the base locus of the linear system $\ls_d(m_1,\ldots, m_s)$ with multiplicity at least
 $k_{t}=k_{t'}:=\max(m_i+m_j+m_k+m_{\iota}-d,0)$.
\end{lemma}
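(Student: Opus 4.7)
\emph{Plan.} We may assume $k_t>0$, otherwise there is nothing to prove. For the existence of the two transversals, apply Lemma \ref{base locus for quadric} to $l_i,l_j,l_k$ to obtain the unique smooth quadric $Q:=Q_{ijk}$ containing them in one ruling; since $l_\iota$ is in general position it is not contained in $Q$, so by B\'ezout it meets $Q$ in exactly two points. Through each of these two points passes a unique line of the ruling of $Q$ opposite to the one containing $l_i,l_j,l_k$. These two lines $t,t'$ meet $l_i,l_j,l_k$ because they lie in the opposite ruling, and meet $l_\iota$ by construction; they are the desired transversals, and the arguments for $t$ and $t'$ are symmetric.

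For the set-theoretic containment, let $\widetilde t\subset X_s$ denote the strict transform of $t$. Because $t$ meets each of $l_i,l_j,l_k,l_\iota$ in exactly one point and no other $l_\ell$ (by general position), one has $H\cdot\widetilde t=1$, $E_\ell\cdot\widetilde t=1$ for $\ell\in\{i,j,k,\iota\}$, and $E_\ell\cdot\widetilde t=0$ otherwise. Hence, for any $D\in|D|$,
\[
D\cdot\widetilde t=d-m_i-m_j-m_k-m_\iota=-k_t<0,
\]
which forces $\widetilde t\subset D$ for every divisor in the linear system.

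For the multiplicity bound, if $k_{ijk}>0$ then by Lemma \ref{base locus for quadric} the quadric $Q_{ijk}$ splits off $|D|$ exactly $k_{ijk}$ times; each copy contributes $1$ to the generic multiplicity of $D$ along $\widetilde t$ (since $t\subset Q_{ijk}$), while the residual linear system has $k_{ijk}$ reduced to zero and $k_t$ reduced by $k_{ijk}$. Thus we may assume $k_{ijk}\le 0$, in which case a general $D\in|D|$ does not contain $Q$, so its restriction $D|_{\widetilde Q}$ to the strict transform $\widetilde Q$ of $Q$ in $X_s$ is an effective divisor on the surface $\widetilde Q$. A local computation identifies $\widetilde Q$ with $Q\cong\PP^1\times\PP^1$ blown up at the two points of $Q\cap l_\iota$ (plus two further points per other line $l_\ell$ that meets $Q$), and the strict transform of $t$ has class $(0,1)-e_1$ in $\widetilde Q$ with self-intersection $-1$. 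From $H|_{\widetilde Q}=(1,1)$, $E_\ell|_{\widetilde Q}=(1,0)$ for $\ell\in\{i,j,k\}$, and $E_\iota|_{\widetilde Q}=e_1+e_2$, a direct computation gives $D|_{\widetilde Q}\cdot\widetilde t=-k_t$. Writing $D|_{\widetilde Q}=m\widetilde t+R$ with $R$ effective and not containing $\widetilde t$, the inequality $R\cdot\widetilde t\ge 0$ combined with $\widetilde t^2=-1$ yields $m\ge k_t$, and for a general $D$ this multiplicity equals the multiplicity of $\widetilde t$ in $D$ itself, concluding the proof.

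The main obstacle is this third step: working on the surface $\widetilde Q$, computing the class of $D|_{\widetilde Q}$ correctly (including tracking the extra exceptional divisors when $s>4$), and justifying that the multiplicity of $\widetilde t$ on $\widetilde Q$ reflects the multiplicity in the ambient threefold for a generic divisor in $|D|$. The first two steps are B\'ezout and a one-line intersection computation.
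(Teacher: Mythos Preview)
Your first two steps---existence of the transversals via $Q_{ijk}$ and set-theoretic containment via $D\cdot\tilde t=-k_t$---are correct and coincide with the paper's argument (the paper calls existence ``straightforward'' and uses the same B\'ezout count on $t$). For the multiplicity bound the paper offers nothing beyond the bare assertion ``In fact, the multiplicity of containment of these transversals is at least $k_t$'', so your third step already attempts more than the paper does.

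That step has a genuine gap, though, and it is exactly the one you flag. The comparison of multiplicities goes the wrong way: for any effective $D'$ not containing $\widetilde Q$ one always has $\mult_{\tilde t}(D'|_{\widetilde Q})\ge\mult_{\tilde t}(D')$, with strict inequality whenever the tangent cone of $D'$ along $\tilde t$ contains the normal direction of $\widetilde Q$ (locally, if $\widetilde Q=\{y=0\}$ and $D'=\{f=0\}$ then $\ord_x f(x,0)\ge\ord_{(x,y)}f$). So $\mult_{\tilde t}(D'|_{\widetilde Q})\ge k_t$ does not yield $\mult_{\tilde t}(D')\ge k_t$. Genericity of $D'$ does not help, since nothing rules out every member of $|D|$ being tangent to $\widetilde Q$ along $\tilde t$; and even using all four quadrics $Q_{ijk},Q_{ij\iota},Q_{ik\iota},Q_{jk\iota}$ through $t$, this style of argument only forces the degree-$\mu$ tangent cone to vanish at four points of $\PP(N_{\tilde t})\cong\PP^1$, giving merely $\mu\ge\min(k_t,4)$. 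A correct route is to blow up $\tilde t$ in $X_s$: since $\tilde t$ is a $(-1)$-curve on both $\widetilde Q_{ijk}$ and $\widetilde Q_{ij\iota}$, which meet transversally along $\tilde t$, one finds $N_{\tilde t/X_s}\cong\OO(-1)\oplus\OO(-1)$; the exceptional divisor $F$ is then $\PP^1\times\PP^1$ with $F|_F=\OO(-1,-1)$, and the strict transform of $D'$ restricts to $F$ in class $(\mu-k_t)f+\mu\sigma$, whose effectivity forces $\mu\ge k_t$.
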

\begin{proof}

It is enough to prove the claim for $s=4$, and for simplicity we will denote the lines by $l_1,l_2,l_3,l_4$. 
The existence of two distinct transversals lines is straightforward. We  will denote by  $t,t'$ such lines, see Figure \ref{fig:Q123}.

Notice that all elements of 
$\ls_d(m_1,m_2,m_3,m_4)$ intersect $t$ (resp. $t'$) 
at least $m_1+m_2+m_3+m_4$ times, therefore if $m_1+m_2+m_3+m_4>d$ the two transversals are part of the base locus of $\ls_d(m_1,m_2,m_3,m_4)$. In fact, the multiplicity of containment of these transversals is at least $k_{t}$.

\end{proof}

\begin{figure}[H]
\centering

 \begin{tikzpicture}[line cap=round,line join=round,x=1.0cm,y=1.0cm]
\node at (4.5,3.8) {$Q_{123}$};
\draw[color=gray, thick] (0,0) to[out=5,in=175] (4,0);
\draw[color=gray, thick]  (4, 0) to[out=100,in=260](4,4);
\draw[color=gray, thick]  (4,4)to[out=185,in=-5] (0,4);
\draw[color=gray, thick]  (0,4) to[out=-80,in=80] (0,0);

\draw[color=red, thick] (.5, 2.5)--(3.5, .5);
\draw[color=red, thick] (.5, 3)--(3.5, 1);
\draw[color=red, thick] (.5, 3.5)--(3.5, 1.5);

\draw[color=black!50!green, thick] (3.5,3.5)--(.5, 1);
\draw[color=black!50!green, thick] (3.5, 2.7)--(1, .5);

\draw (1,1.45) node[circle,fill,inner sep=1.5pt,label=below:$p_4$, color=red]{};
\draw (1.5,.95) node[circle,fill,inner sep=1.5pt,label=below:$p'_4$, color=red]{};

\node at (0.35,3.5) {$l_1$};
\node at (0.35,3) {$l_2$};
\node at (0.35,2.5) {$l_3$};

\node at (3.65,3.5) {$t$};
\node at (3.65,2.7) {$t'$};

\end{tikzpicture}
\caption{$Q_{123}$} \label{fig:Q123}
\end{figure}

\section{Weak Fano case} \label{section:weakfano}
In this section we prove that  $X_s=\Bl_s\PP^3$, the blow-up of $\PP^3$ in $s$ lines in general position, is \emph{weak Fano} if and only if $s\le 6$. In particular this implies that for $s\le 6$ it is a Mori dream space. The notion of weak Fano  varieties relates to positivity properties of linear series.

Notice that knowing the base locus of the linear series associated to a given divisor $D$ makes it simpler to determine when D is nef, 
since the curves that intersect the divisor negatively can only be contained in the diminished base locus of the divisor $\mathbb{B}_-(D)$ (see \cite{ELMNP}).

\medskip

Let us first recall some definitions and properties connected to the Minimal Model Program that give a precise way of detecting Mori dream spaces.
\begin{definition}\label{weak-log}
Let $X$  be a smooth projective variety. We say that
\begin{itemize}
\item $X$ is {\it weak Fano} if $-K_X$ is big and nef.
\item $X$ is {\it log Fano} if there exists an effective divisor $D$ such that $-(K_X +D)$ is ample and the pair $(X,D)$ has {\it Kawamata log terminal} singularities (see \cite{BCHM} for the definition).
\end{itemize}
\end{definition}

\begin{remark}\label{weak-log-mds}
We have $$\text{ weak Fano} \Rightarrow \text{log Fano} \Rightarrow \text{Mori Dream Space}.$$
The first implication follows trivially from Definition \ref{weak-log}, while the second one follows from the results contained in \cite{BCHM}.
\end{remark}

 If $s\le 2$, the threefold $X_s$ is a normal toric variety. 
 For $s=3$,  $X_s$ is isomorphic to the blow-up of $\PP^1\times\PP^1\times\PP^1$ along the small diagonal. This variety is one of the Fano 3-folds
of Picard number four in Fujita's classification, see \cite{MM}. 
Therefore $X_3$ is  log Fano, hence a Mori dream space. 
For $s\ge 4$  it was not known, at the best of our knowledge, 
whether $X_s$  is  a Mori dream space.

\begin{remark} \label{weakfano}
The intersection table on $X_s$ is given by the following:
$$
H^3=1,\quad H^2E_i=0,\quad H E_i^2=-1, \quad E_i^2E_j=0, \quad E_i^3=2,
$$
for every distinct indices $i,j\in\{1,\dots,s\}$.
Therefore we can compute the top self-intersection of the anti-canonical divisor: $(-K_{X_s})^3 = 64 - 10s$.  If $-K_{X_s}$ is nef, it  fails to be big whenever $s \geq 7$, see e.g. \cite[Theorem 2.2.16]{LazarsfeldI}.
\end{remark}

\begin{theorem}\label{weak fano} The threefold $X_s$ is weak Fano if and only if $s\leq 6.$
\end{theorem}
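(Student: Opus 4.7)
The plan is to establish the two directions separately.

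\textbf{Necessity ($s\geq 7$):} By Remark \ref{weakfano}, $(-K_{X_s})^3 = 64-10s$, which is strictly negative once $s\geq 7$. Any nef divisor on a smooth projective threefold satisfies $D^3\geq 0$, so $-K_{X_s}$ cannot be nef in this range, and $X_s$ is not weak Fano.

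\textbf{Sufficiency ($s\leq 6$):} For $s\leq 6$ we have $(-K_{X_s})^3 = 64-10s \geq 4 > 0$. Since a nef divisor on a smooth projective threefold with positive top self-intersection is automatically big (cf.\ \cite[Theorem 2.2.16]{LazarsfeldI}), it suffices to prove that $-K_{X_s}$ is nef. I would check $-K_{X_s}\cdot C\geq 0$ for every irreducible curve $C\subset X_s$, dividing into two cases.

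\emph{Case 1: $C\subset E_i$ for some $i$.} Using Remark \ref{hyperplane}, write $C = \alpha a+\beta b$ in the ruling basis of $E_i\cong \PP^1\times\PP^1$ with $\alpha,\beta\geq 0$. The intersection numbers in Remark \ref{weakfano} then reduce $-K_{X_s}\cdot C$ to a nonnegative integer combination of $\alpha$ and $\beta$.

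\emph{Case 2: $C = \tilde\gamma$ for an irreducible curve $\gamma\subset\PP^3$ of degree $d$.} If $m_i=(\gamma\cdot l_i)$ denotes the scheme-theoretic intersection multiplicity, then $-K_{X_s}\cdot\tilde\gamma = 4d-\sum_i m_i$, and the required inequality is $\sum_i m_i\leq 4d$. For $d=1$ this is immediate: any line in $\PP^3$ meets at most four general lines (five general lines have no common transversal), with equality precisely for the transversals provided by Lemma \ref{base locus for transversals}, on which $-K_{X_s}\cdot\tilde t = 0$. For $d\geq 2$, combine the Bezout bound $m_i\leq d$ (obtained by intersecting $\gamma$ with a plane through $l_i$) with configuration constraints: a conic lies in a plane meeting each $l_i$ in one point and passes through at most five general planar points; a twisted cubic has no trisecants in general position, etc.

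\textbf{Main obstacle:} The delicate step is controlling $\sum m_i$ for curves of higher degree in Case 2. A cleaner alternative is to analyze the stable base locus of $\lvert -K_{X_s}\rvert$ using Lemmas \ref{base locus for quadric} and \ref{base locus for transversals}: with $d=4$ and $m_i=1$, both $k_{ijk}$ and $k_t$ vanish, so neither quadrics nor transversals are forced into the base locus with positive multiplicity. Exhibiting explicit sections of $-K_{X_s}$, in particular the reducible quartics $\tilde Q_{ijk}+\tilde Q_{\{1,\dots,s\}\setminus\{i,j,k\}}$ arising from the pairings in Lemma \ref{base locus for quadric} (for $s=6$ these give ten such divisors, with analogous constructions for $s<6$), then shows that any irreducible curve $C$ with $-K_{X_s}\cdot C<0$ would be forced to lie in the stable base locus and hence be a strict transform of a transversal line; but these satisfy $-K_{X_s}\cdot \tilde t = 0$, a contradiction. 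Combining nefness with $(-K_{X_s})^3>0$ yields that $X_s$ is weak Fano, and by Remark \ref{weak-log-mds} a Mori dream space.
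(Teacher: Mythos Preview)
Your necessity argument is correct and in fact slightly sharper than the paper's phrasing: $(-K_{X_s})^3<0$ for $s\ge 7$ already excludes nefness. For sufficiency, your ``cleaner alternative'' is exactly the route the paper takes: produce enough explicit members of $|{-K_{X_s}}|$ to control the base locus, then check that curves in the base locus pair nonnegatively with $-K_{X_s}$. The gap is in the step you label ``hence.'' You claim that a curve $C$ with $-K_{X_s}\cdot C<0$ is forced to be the strict transform of a transversal line, but this is neither proved nor true: the transversals are \emph{not} in the base locus. For example, for $s=6$ the transversal $t$ to $l_1,l_2,l_3,l_4$ is missed by the section $\tilde Q_{125}+\tilde Q_{346}$, since $t$ meets each of $Q_{125}$ and $Q_{346}$ only in the two points $t\cap l_1,\ t\cap l_2$ (resp.\ $t\cap l_3,\ t\cap l_4$) and lies on neither quadric.

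What the paper actually establishes is that, running over all permutations $\sigma$, the intersection of the reducible members
\[
(2H-E_{\sigma(1)}-E_{\sigma(2)}-E_{\sigma(3)})+(2H-E_{\sigma(4)}-E_{\sigma(5)}-E_{\sigma(6)})
\]
for $s=6$ (and $(2H-E_{\sigma(1)}-E_{\sigma(2)}-E_{\sigma(3)})+(H-E_{\sigma(4)})+(H-E_{\sigma(5)})$ for $s=5$) is contained in $\bigcup_i E_i$. Hence any curve with $-K_{X_s}\cdot C<0$ lies in some $E_i$, and your own Case~1 computation then finishes the proof. So the missing ingredient is precisely this base-locus containment in $\bigcup_i E_i$; once you have it, the argument closes through Case~1, not through transversals. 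Your direct Case~2 approach of bounding $\sum_i m_i\le 4d$ for arbitrary strict transforms is not pursued in the paper and, as you acknowledge, is hard to make uniform for $d\ge 2$; the base-locus argument bypasses it entirely.
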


\begin{proof}
By Remark \ref{weakfano}, it is enough to prove that $-K_s$ is nef for $s \le 6$.
For $s\le 4$, $-K_s$ is clearly nef, because it can be written as a sum of nef divisors given by pencils $H-E_i$. 

We will consider the cases $s=5,6$.
If $s=5$,  notice that for every permutation $\sigma\in\mathfrak{S}_5$, the union of divisors $$(2H-E_{\sigma(1)}-E_{\sigma(2)}-E_{\sigma(3)})+(H-E_{\sigma(4)})+(H-E_{\sigma(5)})$$ belongs to the anti-canonical system. 
Similarly, if $s=6$, for every permutation $\sigma\in\mathfrak{S}_6$, the union $$(2H-E_{\sigma(1)}-E_{\sigma(2)}-E_{\sigma(3)})+(2H-E_{\sigma(4)}-E_{\sigma(5)}-E_{\sigma(6)})$$ belongs to the anti-canonical system. Therefore the base locus of $-K_{X_s}$, for $s=5,6$,  is contained in the intersection of all these unions, which is $\sum_{i=1}^sE_i$.
We also know that any curve $C$ intersecting $-K_X$ negatively is
contained in $\mathbb{B}_-(-K_X) \subseteq \bigcup_{i=1}^sE_i$.
Now, since for any $i\in\{1,\dots,s\}$ the exceptional divisor $E_i$ is isomorphic to $\PP^1 \times \PP^1$, any curve $C$ contained in $E_i$ is such that 
 $C\cdot E_i \leq 0$  and $C\cdot E_j=0$ for any $j\in\{1,\dots,s\}\setminus\{i\}$. 
In particular this implies that $-K_{X_s}$ is nef.
\end{proof}

The following is an immediate consequence of Theorem \ref{weak-log-mds} and Remark \ref{weak-log-mds}
\begin{corollary}
If $s\le 6$, the threefold $X_s$ is a Mori dream space.
\end{corollary}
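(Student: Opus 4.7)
The plan is to combine Theorem \ref{weak fano} directly with the chain of implications recorded in Remark \ref{weak-log-mds}, so the proof is essentially a one-line deduction: weak Fano $\Rightarrow$ log Fano $\Rightarrow$ Mori dream space. The hypothesis $s \le 6$ is exactly the range in which Theorem \ref{weak fano} produces weak Fano-ness, so nothing new needs to be verified on the geometric side.

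In more detail, I would first invoke Theorem \ref{weak fano} to conclude that for $s \le 6$ the anticanonical divisor $-K_{X_s}$ is big and nef, i.e.\ that $X_s$ is weak Fano in the sense of Definition \ref{weak-log}. The first implication in Remark \ref{weak-log-mds} then yields that $X_s$ is log Fano: since $-K_{X_s}$ is big and nef, Kodaira's lemma lets us write $-K_{X_s} \sim_{\QQ} A + \varepsilon \Delta$ with $A$ ample and $\Delta$ effective, and choosing $\varepsilon > 0$ small enough makes the pair $(X_s, \varepsilon \Delta)$ Kawamata log terminal while keeping $-(K_{X_s} + \varepsilon \Delta)$ ample. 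Finally, the second implication in Remark \ref{weak-log-mds} is the content of \cite{BCHM}: log Fano varieties have finitely generated Cox rings, and are thus Mori dream spaces by definition.

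There is no real obstacle, since the hard work — the explicit decompositions of the anticanonical system into nef summands for $s = 5, 6$, combined with the analysis of curves in $\mathbb{B}_-(-K_{X_s}) \subseteq \bigcup_i E_i$ using $E_i \cong \PP^1 \times \PP^1$ — was already carried out in the proof of Theorem \ref{weak fano}. The corollary is therefore a formal consequence of results the paper has already established.
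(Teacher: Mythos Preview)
Your proposal is correct and matches the paper's approach exactly: the paper states the corollary as ``an immediate consequence of Theorem \ref{weak fano} and Remark \ref{weak-log-mds}'' with no further proof given. Your expanded justification of the implication weak Fano $\Rightarrow$ log Fano via Kodaira's lemma is more detail than the paper itself supplies, but the logical route is identical.
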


\section{Cones of effective divisors}\label{effective cones}

In this section we will consider $\mathbb{Q}$-Cartier divisors  on $X_s=\Bl_s\PP^3$, the blow-up
of $\PP^3$ in $s$ general lines. Any divisor here will have the form \eqref{divisor}, where the assumption on
the coefficients has been relaxed to $d,m_i\in \mathbb{Q}$. The purpose is in fact to 
describe the boundary of the effective cone.
 These cones can be extremely complicated for a general variety.
 In the specific case of Mori dream spaces they are  polyhedral but there is no reason to expect that the hyperplanes 
cutting the facets of such cone are described by integral equations
in the degree and the multiplicities. 

 For  $s\le 2$, since $X_s$ is a normal toric variety, 
 the cone of effective divisors can be computed by means of techniques from toric geometry, see for instance \cite{CLS}.
In this section we extend the description of the effective cone
from the toric case to the case $s=3,4,5$, in particular showing that it is a rational and 
polyhedral cone.

\medskip

We will let $(d, m_1, \dots, m_s)$  be the coordinates of the
N\'eron-Severi group of $X_s$. In the next sections we will give equations describing
the cones of effective divisors for $s\le 5$.

\subsection{The case of three lines}

\begin{theorem}\label{facets eff 3}
If $s\le3$, the effective cone of $X_3$ is the closed rational polyhedral cone given by the
 following inequalities:
\begin{align}
0&\le d,\label{1.3} \\ 
m_i&\le d, \ \forall i\in\{1,\dots,s\}, \label{2.3}\\
 m_i+m_j&\le d, \ \forall i,j\in\{1,\dots,s\}, i\ne j \ (\textrm{if } s=2,3).\label{3.3}
\end{align}
\end{theorem}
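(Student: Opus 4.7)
The plan is to prove both inclusions between $\Eff(X_s)$ and the candidate cone $\mathcal{C}$ cut out by the inequalities \eqref{1.3}--\eqref{3.3}.

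First, I would establish $\Eff(X_s) \subseteq \mathcal{C}$ by intersecting an arbitrary effective divisor $D = dH - \sum m_i E_i$ with three families of moving curves, one for each family of inequalities. Specifically: the strict transform of a general line in $\PP^3$ disjoint from all the $l_i$ yields $D \cdot \ell = d \geq 0$; the strict transform of a general line meeting $l_i$ transversally in one point (intersecting $H$ once and $E_i$ once) yields $D \cdot \tilde L = d - m_i \geq 0$; and for $s \geq 2$, the strict transform of a general line meeting both $l_i$ and $l_j$ (a $2$-parameter family parametrized by $l_i \times l_j$) yields $D \cdot \tilde T = d - m_i - m_j \geq 0$. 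In each case the curve class moves in a family sweeping out a subset of positive dimension, so a generic representative is not contained in $D$ and the intersection number is non-negative.

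For the reverse inclusion $\mathcal{C} \subseteq \Eff(X_s)$, I would identify a set of effective classes that generate $\mathcal{C}$ as a convex cone: the exceptional divisors $E_i$; the classes $H - E_i$, realized by the planes through $l_i$; and, only for $s = 3$, the unique quadric class $Q = 2H - E_1 - E_2 - E_3$ through the three skew lines, which is effective by Lemma \ref{base locus for quadric}. Each of these lies in $\mathcal{C}$ by direct inspection. To see they span $\mathcal{C}$, I would write an arbitrary $(d, m_1, \ldots, m_s) \in \mathcal{C}$ as $D = cQ + \sum_i b_i(H - E_i) + \sum_i a_i E_i$ with unknowns $a_i, b_i, c \geq 0$; matching coefficients in the basis $H, E_1, \ldots, E_s$ yields $d = 2c + \sum b_i$ and $a_i = b_i + c - m_i$. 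In the regime $M := \sum m_i \leq d$ (the only regime possible for $s \leq 2$), the choice $c = 0$ suffices and the $b_i$ are obtained by distributing $d$ subject to $b_i \geq \max(m_i, 0)$, feasible thanks to \eqref{2.3}--\eqref{3.3}. In the regime $M > d$, possible only when $s = 3$, set $c = M - d$, $b_i = d - m_j - m_k$ for $\{i,j,k\} = \{1,2,3\}$, and $a_i = 0$; all coefficients are non-negative because of \eqref{3.3} together with its cumulative consequence $M \leq 3d/2$, obtained by summing the three pairwise inequalities.

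The main obstacle is the decomposition in the regime $M > d$: this is precisely the situation that forces the quadric generator into the picture, so the argument would collapse if one tried to generate $\mathcal{C}$ using only the classes $E_i$ and $H - E_i$. Conceptually the crucial input is that the three pairwise inequalities $m_i + m_j \leq d$ combine to give $M \leq 3d/2$, leaving exactly enough room to absorb a quadric coefficient $c = M - d$, after which the residue $D - cQ$ decomposes as a non-negative combination of the classes $H - E_i$ alone.
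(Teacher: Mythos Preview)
Your argument is correct and, for the ``sufficiency'' direction, essentially identical to the paper's: in the regime $M>d$ your decomposition $D=(M-d)Q+\sum_i(d-m_j-m_k)(H-E_i)$ is exactly the one the paper writes down, and in the regime $M\le d$ both proofs reduce to a non-negative combination of $H-E_i$ and $E_i$. For the ``necessity'' direction your phrasing via intersection with moving curve classes is the standard dual of the paper's B\'ezout/base-locus argument (the paper observes that the secant lines to $l_i,l_j$ sweep out $\PP^3$, forcing $|D|=\emptyset$ when $m_i+m_j>d$); both are valid and amount to the same geometry.

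One small point to tighten: ``sweeping out a subset of positive dimension'' is not enough to guarantee that a general member of the family avoids an arbitrary effective divisor (think of the rulings of $E_i$, which sweep out only $E_i$). What you need---and what your three families in fact satisfy---is that they are \emph{covering} families, i.e.\ sweep out a dense open subset of $X_s$. Also, the inequality $M\le 3d/2$ you mention is not actually needed: $c=M-d>0$ by the case assumption and $b_i=d-m_j-m_k\ge 0$ directly by \eqref{3.3}.
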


\begin{proof}
Notice that if $m_i<0$ for some $i\in\{1,\dots,s\}$, then $D$ is effective if and only if $D+m_iE_i$ is. 
Hence we may assume that $m_i\ge0$ for all $i=1,\dots,s$. 
In this case \eqref{3.3} implies \eqref{2.3}.  

Assume first of all that $D$ is effective. Clearly $d\ge0$.
Assume $m_i+m_j>d$, for some $i\ne j$. Each point $p$ of $\PP^3 \smallsetminus \{l_1,\dots,l_s\}$ 
lies on the line spanned by points $p_i\in l_i$ and  $p_j\in l_j$.
 By B{\'e}zout's Theorem such a line  is contained in the base locus of $\ls_d(m_1,\dots,m_s)$. 
Hence  by Remark \ref{l.s. in P^3}, the strict transform on $X_s$ of such a line 
 is contained in the base locus of $|D|$, and in particular so does 
the inverse image of $p$. Therefore each point of 
$X_s\smallsetminus\{E_1,\dots,E_s\}$ is in the
base locus of $|D|$, hence $|D|=\emptyset$. 

\medskip 

We now prove that a divisor $D$ that satisfies the inequalities is effective.

If $s=2$, we can write $D=(d-m_1-m_2)H+m_1(H-E_1)+m_2(H-E_2)$ and conclude, 
as $D$ is sum of effective divisors with non-negative coefficients.

Assume $s=3$ and $m_1+m_2+m_3-d\le0$. 
We can for instance write $D=(d-m_1-m_2-m_3)E_1+(d-m_2-m_3)(H-E_1)+m_2(H-E_2)+m_3(H-E_3)$ and conclude.
If, instead, $k_{123}=m_1+m_2+m_3-d>0$, we can write $D=k_{123}(2H-E_1-E_2-E_3)+(d-m_2-m_3)(H-E_1)+(d-m_1-m_3)(H-E_2)+(d-m_1-m_2)(H-E_3)$. 
\end{proof}

\subsection{The case of four lines}

\begin{theorem}\label{facets eff 4}
The effective cone of $X_4$ is the closed rational polyhedral cone given by the following inequalities:

\begin{align}
0&\le d,  \label{1.4} \\ 
m_i&\le d,   \ \forall i\in\{1,\dots,4\}, \label{2.4} \\
 m_i+m_j&\le d, \ \forall i,j\in\{1,\dots,4\}, i\ne j, \label{3.4}\\
(m_1+\cdots+m_4)+m_i&\leq 2d,  \ \forall i\in\{1,\dots,4\}, \label{5.4}\\
2(m_1+\cdots+m_4)&\le 3d \label{4.4}.
\end{align}
\end{theorem}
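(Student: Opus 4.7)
The plan is to mirror Theorem \ref{facets eff 3}: first prove each inequality is necessary for $D$ to be effective, then prove the converse by explicitly decomposing $D$ as a nonnegative rational combination of the effective divisors $H$, $E_i$, $H-E_i$, and the quadrics $Q_{ijk}=2H-E_i-E_j-E_k$ of Lemma \ref{base locus for quadric}.

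For necessity, inequalities (1.4)--(3.4) follow from the argument of Theorem \ref{facets eff 3} by intersecting $D$ with a general line, a line meeting only $l_i$, and a line meeting only $l_i$ and $l_j$. For (5.4) at index $i$, I would use the 3-dimensional covering family of plane conics $C$ whose supporting plane $\pi$ lies in the pencil of planes through $l_i$ and which pass through each intersection point $\pi \cap l_j$ for $j \ne i$: planes through $l_i$ form a $\PP^1$, and conics in a plane through three prescribed points form a $\PP^2$, so the family sweeps out $X_4$ and the general member satisfies $C\cdot H=2$, $C\cdot E_j=1$ for $j\ne i$, and $C\cdot E_i=2$. For (4.4), the analogous 4-dimensional covering family is that of twisted cubics meeting each $l_j$ in $2$ points, obtained by imposing $4\cdot 2=8$ independent incidence conditions on the 12-dimensional Hilbert component; its general member has $C\cdot H=3$ and $C\cdot E_j=2$. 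Intersecting $D$ with a general member of each of these families yields (5.4) and (4.4).

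For sufficiency, I would first reduce to the case $m_i\ge 0$ for all $i$: whenever $m_i<0$, the divisor $D_0 := D - |m_i| E_i$ has zero $E_i$-multiplicity and inherits (1.4)--(5.4) from $D$ (the pairwise bounds in (3.4) for pairs not containing $i$ give $2\sum_{j\ne i}m_j \le 3d$ and $2m_l+\sum_{j\ne i,l}m_j\le 2d$, yielding (4.4) and (5.4) for $D_0$); and if $D_0$ is effective, then $D = D_0 + |m_i| E_i$ is a sum of effective divisors. Assuming now $m_i\ge 0$, I would look for coefficients $a_{ijk},b_i,c \in \QQ_{\ge 0}$ realizing
\[
D \;=\; \sum_{\{i,j,k\}} a_{ijk}\,Q_{ijk} \;+\; \sum_{i=1}^4 b_i\,(H-E_i) \;+\; c\,H.
\]
Matching $E_i$- and $H$-coefficients forces $b_i = m_i - \sum_{I\ni i} a_I$ and $c = d - \sum_i m_i + \sum_I a_I$, reducing the problem to the linear-programming feasibility of
\[
\exists\, a_I \ge 0 \ :\ \sum_{I\ni i} a_I \le m_i \ \forall i,\qquad \sum_I a_I \ge \sum_i m_i - d.
\]

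By LP duality, the maximum of $\sum_I a_I$ under the box constraints equals $\min\{\sum_i m_i y_i : y_i\ge 0,\ \sum_{i\in I} y_i \ge 1\ \forall I\}$. Enumeration shows that the extreme points of this 4-variable dual polyhedron are, up to $\mathfrak S_4$-symmetry, $(\tfrac13,\tfrac13,\tfrac13,\tfrac13)$, $(\tfrac12,\tfrac12,\tfrac12,0)$ and $(1,1,0,0)$, giving dual costs $\tfrac13\sum m_i$, $\tfrac12(\sum m_i - m_\iota)$ and $m_i + m_j$, respectively. Requiring each of these three quantities to be $\ge \sum m_i - d$ is equivalent, in order, to inequality (4.4), to (5.4) at index $\iota$, and to (3.4) at the complementary pair $\{k,l\}=\{1,\ldots,4\}\setminus\{i,j\}$. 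Hence the LP is feasible precisely when (3.4)--(5.4) hold, yielding the desired decomposition and thus $D\in\Eff(X_4)$. The principal obstacle I foresee is the combinatorial enumeration of the extreme points of the dual polyhedron and matching the three types of dual cost with the inequalities; a secondary technical check is that the conic and twisted-cubic families used in the necessity direction genuinely dominate $X_4$ for four general lines, so that a general member has the claimed intersection numbers with the $E_i$.
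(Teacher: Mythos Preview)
Your sufficiency argument via LP duality is correct and takes a genuinely different route from the paper. The paper argues by cases: if every triple sum $m_{i_1}+m_{i_2}+m_{i_3}$ is at most $d$, it writes $D=D'+m_4(H-E_4)$ with $D'$ effective by Theorem~\ref{facets eff 3}; otherwise it subtracts $k_{123}(2H-E_1-E_2-E_3)$ and verifies that the residual still satisfies \eqref{1.4}--\eqref{4.4}. Your approach is more uniform: the three vertex types $(\tfrac13,\tfrac13,\tfrac13,\tfrac13)$, $(\tfrac12,\tfrac12,\tfrac12,0)$, $(1,1,0,0)$ of the dual polyhedron correspond exactly to \eqref{4.4}, \eqref{5.4}, \eqref{3.4}, and the enumeration you propose is short and complete. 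Both methods yield the same generators $E_i$, $H-E_i$, $2H-E_i-E_j-E_k$, in agreement with Corollary~\ref{rays eff 3 4 5}.

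For necessity, your treatment of \eqref{5.4} is essentially the paper's (it restricts $D$ to a general blown-up plane $\tilde{\Delta}$ in the pencil through $l_i$ and intersects with the net of conics through $p_2,p_3,p_4$). For \eqref{4.4}, however, the paper does \emph{not} use twisted cubics. It invokes Lemma~\ref{base locus for transversals}: after reducing to $\sum m_i>d$, the two transversals $t,t'$ to $l_1,\dots,l_4$ lie in the base locus of $|D|$ with multiplicity $k_t=\sum m_i-d$; since $t,t'$ are skew, every point of $\PP^3$ lies on a line meeting both, and a B\'ezout argument forces that line into the base locus once $2k_t>d$, i.e.\ once \eqref{4.4} fails, so $|D|=\emptyset$. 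This is entirely elementary. Your twisted-cubic family has the right numerics, but the dominance check you flag is a real obstacle rather than a formality: the obvious $3$-dimensional subfamilies of $(2,1)$-curves on each quadric $Q_{ijk}$ passing through the two points of $l_\iota\cap Q_{ijk}$ are confined to those surfaces and do not cover $\PP^3$, so you would still need to exhibit bisecant twisted cubics off all four quadrics through a general point, which is not immediate. The transversal-line argument sidesteps this entirely.
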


\begin{proof}
As in the proof of Theorem \ref{facets eff 3}, we may assume that $m_i\ge0$, 
for all $i=1,2,3,4$. In this case \eqref{2.4} is redundant.

If $D$ is effective, then \eqref{1.4} and \eqref{3.4} are satisfied by Theorem  
\ref{facets eff 3}. 

We now prove that $D\ge0$ implies  \eqref{5.4}.
Notice that, without loss of generality, we may prove that the inequality is
 satisfied for a choice of index $i\in\{1,2,3,4\}$. 
Set $i=1$ and consider the pencil of planes  containing $l_1$. 
Notice that it covers the whole space $\PP^3$. Therefore in order 
to conclude it is enough to prove that if the inequality is violated, 
then each point of the general plane
  of the pencil is contained in the
 base locus of $|D|$, making it empty. 
Let $\Delta$ be a general plane containing $l_1$. Each line $l_j$, $j=2,3,4$, 
intersects $\Delta$ in 
a point: 
write $p_j:=l_i\cap\Delta$ (Figure \ref{fig:l_1}).  By the generality assumption,
the  points $p_2,p_3,p_4$ are not collinear. We will
consider the net of conics in $\Delta$ passing through these
three points.
\begin{figure}[H]
\centering
\begin{tikzpicture}[line cap=round,line join=round,x=1.0cm,y=1.0cm]
\node at (5.3,3.8) {$\Delta$};
\draw[color=gray, thick] (0,0) -- (4,0);
\draw[color=gray, thick]  (4, 0) -- (5,4);
\draw[color=gray, thick]  (5,4)-- (1,4);
\draw[color=gray, thick]  (1,4) --(0,0);
\draw[color=red, thick] (1.5, 3.5)--(3.5, .5);
\node at (1.35,3.5) {$l_1$};
\draw (1,2) node[circle,fill,inner sep=1.5pt,label=below:$p_4$, color=red]{};
\draw (1.5,1) node[circle,fill,inner sep=1.5pt,label=below:$p_2$, color=red]{};
\draw (3,3) node[circle,fill,inner sep=1.5pt,label=below:$p_3$, color=red]{};
\end{tikzpicture}
\caption{$\Delta$} \label{fig:l_1}
\end{figure}
Let us denote by $\tilde{\Delta}$
the  strict transform of $\Delta$ in $X_4$, that 
 is a projective plane blown-up in three non-collinear points. 
The exceptional divisor $E_1$ intersects the blown-up plane $\tilde{\Delta}$
in the class of a general line.
Precisely, we have the following intersection table:
\begin{align*}
h:=& H|_{\tilde{\Delta}}=E_1|_{\tilde{\Delta}}, \\
e_j:=& E_j|_{\tilde{\Delta}}, \quad j=2,3,4.
\end{align*}
The classes $h$ and $e_j$ for
$j=2,3,4$ generate the Picard group of ${\tilde{\Delta}}$. 
Consider the restriction 
$$
D|_{\tilde{\Delta}}=
 (d-m_1)h-\sum_{j=2}^4m_je_j.
$$
The intersection number between $D|_{\tilde{\Delta}}$ and the strict transform on 
$\tilde{\Delta}$ of a conic through $p_2,p_3,p_4$
is $2(d-m_1)-m_2-m_3-m_4$. 
If this number is negative, namely if \eqref{5.4} is violated with $i=1$, then each conic through $p_2, p_3, p_4$
 is contained in the base locus of $|D|_{\tilde{\Delta}}|$. Since these conics 
cover $\Delta$, we conclude $\tilde{\Delta}$ is contained in the base locus of $|D|$. This is a contradiction since a pencil of planes can not be contained in the fixed part of an effective divisor.

We now prove  the last inequality.
Notice first of all that if $m_1+m_2+m_3+m_4\le d$, then \eqref{4.4} 
follows obviously from \eqref{3.4}. Hence we may assume that
$m_1+m_2+m_3+m_4>d$.
Observe that each point of $X_4\smallsetminus\{E_1,\dots,E_4\}$ 
sits on the strict transform of a line in $\PP^3$ that is transversal to $t$ and $t'$,
because $t$ and $t'$ are skew.
Using Lemma \ref{base locus for transversals} and arguing as for the proof of Theorem \ref{facets eff 3}, we conclude that if $2(m_1+\cdots+m_4)> 3d$ then $|D|$ is empty.

\medskip 

We now prove that a divisor $D$ satisfying \eqref{1.4}, \eqref{3.4}, \eqref{5.4} and \eqref{4.4} is effective.

Consider first the case $m_{i_1}+m_{i_2}+m_{i_3}\le d$ for all $\{i_1,i_2,i_3\}\subset\{1,2,3,4\}$. Write $D=D'+m_4(H-E_4)$. Then
$$D'=(d-m_4)H-m_1E_1-m_2E_2-m_3E_3$$ is effective by Theorem  \ref{facets eff 3}.
Hence we conclude in this case.

After reordering the  indices $\{1,2,3,4\}$ if necessary, we assume that
$k_{123}=m_1+m_2+m_3-d>0$. It is enough to prove that 
$$
D'':=D-k_{123}(2H-E_1-E_2-E_3)
$$ 
satisfies \eqref{1.4},  \eqref{3.4}, \eqref{5.4} and \eqref{4.4} to conclude. 
The coefficients of the expanded expression 
$$
D''=:d'H-\sum_{i=1}^3m'_iE_i-m_4E_4
$$ 

are $d'=3d-2(m_1+m_2+m_3)$, $m'_1=d-m_2-m_3$, $m'_2=d-m_1-m_3$, $m'_3=d-m_1-m_2$. 
Since all of these coefficients are positive, it is enough to check that \eqref{3.4}, \eqref{5.4}
 and \eqref{4.4} are satisfied. 
The divisor $D''$  satisfies condition  \eqref{3.4} because $D$ does. 
Indeed $m'_i+m'_j\le d'$ is equivalent to $m_i+m_j\le d$,  for $1\le i<j\le3$; moreover 
$m'_i+m_4\le d'$  is equivalent to $(m_1+m_2+m_3+m_4)+m_i\le 2d$, for all $1\le i\le 3$, i.e. condition \eqref{5.4}. 
Furthermore $D''$ satisfies \eqref{5.4} with $i\in\{1,2,3\}$ if an only if $D$ does; 
$D''$ satisfies  \eqref{5.4} with $i=4$ if and only if $D$ satisfies \eqref{4.4}.
Finally, $D''$ satisfies \eqref{4.4} because $D$ does; we leave the details to the reader. 
\end{proof}

%
%

\subsection{The case of five lines}

\begin{lemma}\label{cubic surface}
There does not  exist any cubic surface containing five general lines of $\mathbb{P}^3$.
\end{lemma}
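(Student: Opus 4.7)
The plan is to argue by contradiction, combining Lemma~\ref{base locus for quadric} with a B\'ezout-style computation on the quadric $Q_{123}$.

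First, by Lemma~\ref{base locus for quadric}, the lines $l_1, l_2, l_3$ lie on a unique smooth quadric $Q := Q_{123} \cong \mathbb{P}^1 \times \mathbb{P}^1$ and, by generality, they belong to the same ruling of $Q$; say they all have bidegree $(1,0)$. Assume for contradiction that a cubic surface $S \subset \mathbb{P}^3$ contains all five lines $l_1,\ldots,l_5$. Since the restriction of $\mathcal{O}_{\mathbb{P}^3}(1)$ to $Q$ has bidegree $(1,1)$, the intersection $S \cap Q$ is a curve of bidegree $(3,3)$ on $Q$. Subtracting the bidegree $(3,0)$ component $l_1+l_2+l_3$, the residual curve has bidegree $(0,3)$, and so it consists of three fibers $f_1, f_2, f_3$ of the opposite ruling (counted with multiplicity).

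Next, each of $l_4$ and $l_5$ meets $Q$ in two points, yielding four points $p_4, p_4', p_5, p_5' \in Q$, which must all lie in $S \cap Q$. By generality, none of these points lies on $l_1 \cup l_2 \cup l_3$, so they all lie on the union $f_1 \cup f_2 \cup f_3$. To conclude, it suffices to check that, for $l_4, l_5$ in general position, the four points $p_4, p_4', p_5, p_5'$ lie on four \emph{distinct} fibers of the second ruling of $Q$: four points cannot be distributed among only three fibers, giving the sought contradiction.

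The one step requiring a genuine generality argument is this final distinctness claim. I would verify it by observing that if two of $\{p_4, p_4', p_5, p_5'\}$ shared a common fiber $f$ of the second ruling, then either one of $l_4, l_5$ would have to equal $f$ (when the two coincident points come from the same line), or $l_4$ and $l_5$ would have to meet a common fiber of $Q$ (when they come from different lines); each of these is a proper closed condition on the pair $(l_4, l_5)$ in the product of Grassmannians, and hence fails for lines in general position. This is the only real subtlety of the argument, and it is the step I would expect to spell out most carefully.
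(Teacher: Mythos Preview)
Your argument is correct and takes a genuinely different route from the paper's. You work on the quadric $Q_{123}$ and use a bidegree/pigeonhole count: the residual $(S\cap Q_{123}) - (l_1+l_2+l_3)$ is an effective $(0,3)$-divisor on $\PP^1\times\PP^1$, hence a sum of three fibers of the second ruling, whereas the four intersection points of $l_4,l_5$ with $Q_{123}$ lie on four distinct such fibers by generality. The paper instead brings in the transversal lines of Lemma~\ref{base locus for transversals}: since $S$ must contain all ten transversals $t_i,t'_i$, restricting to the plane $\Delta=\langle l_1,t_5\rangle$ forces the plane cubic $S|_\Delta$ to contain the two lines $l_1,t_5$ and to pass through three further non-collinear points $l_5\cap\Delta$, $t_1\cap\Delta$, $t'_1\cap\Delta$, which is impossible. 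Your approach is more self-contained (it uses only Lemma~\ref{base locus for quadric}), while the paper's proof exercises the transversal geometry that recurs elsewhere in the paper. One small omission worth patching: you should remark at the outset that $Q_{123}\not\subset S$, so that $S\cap Q_{123}$ is actually a curve; this is immediate, since $Q_{123}\subset S$ would give $S=Q_{123}\cup H$ for some plane $H$, and then the general lines $l_4,l_5$ (which do not lie on $Q_{123}$) would both have to lie in $H$, contradicting that they are skew.
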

\begin{proof}
Assume that there exists a cubic surface $S$ containing $5$ general lines of $\PP^3$. 
It has to be irreducible. In fact  there is no quadric surface containing four lines and no plane containing two lines. 
Therefore $S|_{\Delta}$ is a plane cubic, for each plane $\Delta\subset\PP^3$.

Now, for every $i=1,\dots,5$, denote by  $t_i,t'_i$  the two transversal lines to the four lines   $\{l_1,\dots,\check{l}_i,\dots,l_5\}\subset\PP^3$.
By Lemma \ref{base locus for transversals}, $t_i,t'_i$ are contained in $S$, 
for every $i=1,\dots,5$.  
Let now $\Delta\subset\PP^3$ be the plane spanned by $l_1$ and $t_5$ (see Figure \ref{fig:l_1t_5}).

\begin{figure}[H]
\centering
\begin{tikzpicture}[line cap=round,line join=round,x=1.0cm,y=1.0cm]
\node at (5.3,3.8) {$\Delta$};
\draw[color=gray, thick] (0,0) -- (4,0);
\draw[color=gray, thick]  (4, 0) -- (5,4);
\draw[color=gray, thick]  (5,4)-- (1,4);
\draw[color=gray, thick]  (1,4) --(0,0);

\draw[color=red, thick] (1.5, 3.5)--(3.5, .5);
\node at (1.35,3.5) {$l_1$};

\draw[color=black!50!green, thick] (4.5, 3.5)--(1,1);
\node at (4.5,3.1) {$t_5$};

\draw (1.5,2.8) node[circle,fill,inner sep=1.5pt,label=below:$p_5$, color=red]{};
\draw (3,3.5) node[circle,fill,inner sep=1.5pt,label=below:$q_1$, color=black!50!green]{};
\draw (3.8,1.5) node[circle,fill,inner sep=1.5pt,label=below:$q_1'$, color=black!50!green]{};

\draw (1.55,1.4) node[circle,fill,inner sep=1.5pt,label=below:$p_4$, color=red]{};
\draw (2.2,1.85) node[circle,fill,inner sep=1.5pt,label=below:$p_3$, color=red]{};
\draw (3.3,2.65) node[circle,fill,inner sep=1.5pt,label=below:$p_2$, color=red]{};

\end{tikzpicture}

\caption{$\Delta_1^5$} \label{fig:l_1t_5}
\end{figure}
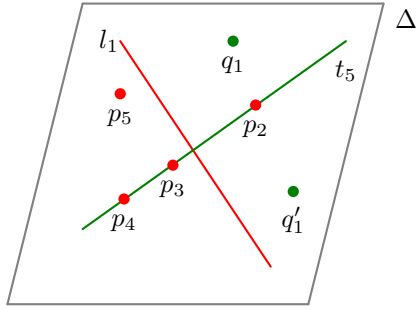

The restricted linear series $|S|_\Delta|$  is  the linear series of plane cubics containing $l_1$ and $t_5$ and passing through the three points $l_5\cap\Delta, t_1\cap\Delta, t'_1\cap\Delta$. By the generality assumption, these three points are not collinear, hence the restricted  series is empty. This concludes the proof. 
\end{proof}

 \begin{remark}
Assuming that a collection of lines is in general position, is  much stronger than 
assuming that the lines are skew. 

Recall that  any smooth cubic surface $S$ may be realized as the blow-up of the projective plane $\PP^2$ in six points $p_1,\dots,p_6$ in general position. There are $27$ lines in $S$:  the $6$ exceptional divisors, the strict transforms of the lines through two of the $p_i$'s, and the strict transforms of the conics through five of the $p_i$'s.

Therefore, for any choice of five among the exceptional divisors on the blown-up $\PP^2$, there is a
line intersecting them, whereas for general lines on the cubic surface, this can happen for at most four lines, see Lemma \ref{base locus for transversals}. 
\end{remark}

\begin{theorem}\label{facets eff 5}
The effective cone of $X_5$ is the closed rational polyhedral cone given by the following inequalities:
\begin{align}
0&\le d,  \label{1.5} \\ 
m_i&\le d,  \ \forall i\in\{1,\dots,5\}, \label{2.5} \\
 m_i+m_j&\le d, \ \forall i,j\in\{1,\dots,5\}, i\ne j, \label{3.5}\\
(m_1+\cdots+m_5)+m_i-m_j&\le 2d, \ \forall i,j\in\{1,\dots,5\}, i\ne j, \label{4.5}\\
(m_1+\cdots+m_5)+m_i&\le 2d, \ \forall i\in\{1,\dots,5\}, \label{5.5}\\
2(m_1+\cdots+m_5)&\le 3d.\label{6.5}
\end{align}
\end{theorem}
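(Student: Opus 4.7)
The plan is to follow the strategy of Theorems~\ref{facets eff 3} and~\ref{facets eff 4} and split the argument into necessity and sufficiency. As a preliminary reduction I would assume $m_i\ge 0$ for all $i$: if some $m_i<0$, then $D$ is effective if and only if $D+m_iE_i$ is, and under this assumption \eqref{2.5} follows from \eqref{3.5}.

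For necessity (effective $\Rightarrow$ inequalities), I would exhibit for each inequality a covering family of curves in $\PP^3$ whose strict transforms in $X_5$ are forced to lie in $\Bs|D|$ as soon as the inequality fails, giving $|D|=\emptyset$. Inequalities \eqref{1.5} and \eqref{3.5} carry over from Theorem~\ref{facets eff 3} verbatim, and \eqref{5.5} carries over from Theorem~\ref{facets eff 4} by restricting to a generic plane $\Delta$ through $l_i$ and using the pencil of conics through the four induced points. The two new inequalities require two new covering families. For \eqref{4.5} with ordered pair $(i,j)$, I would take the family $\mathcal{F}_{i,j}$ of plane conics $C\subset\PP^3$ whose ambient plane contains $l_i$ and which pass through the three points $\Delta\cap l_k$ for $k\notin\{i,j\}$. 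A parameter count gives $\dim\mathcal{F}_{i,j}=3$; these conics sweep $\PP^3$, automatically meet $l_i$ in $2$ points by B\'ezout in $\Delta$, and generically miss $l_j$, so $\tilde C\cdot D = 2d-2m_i-\sum_{k\notin\{i,j\}}m_k$, which rearranges to \eqref{4.5}. For \eqref{6.5} I would use the family $\mathcal{T}$ of twisted cubics $C\subset\PP^3$ meeting each $l_i$ in at least two points; the Hilbert scheme of twisted cubics is $12$-dimensional and each chord condition on $l_i$ is codimension $2$, so $\mathcal{T}$ is generically $2$-dimensional and sweeps $\PP^3$. For such $C$ one computes $\tilde C\cdot D=3d-2\sum_i m_i$, yielding \eqref{6.5}.

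For sufficiency (inequalities $\Rightarrow$ effective), I would decompose $D$ as a non-negative sum of $E_i$, $H-E_i$, and $2H-E_i-E_j-E_k$ by an induction parallel to the one at the end of the proof of Theorem~\ref{facets eff 4}. If $m_i=0$ for some $i$, the problem reduces to $X_4$ and Theorem~\ref{facets eff 4} applies. Otherwise, if some $k_{ijk}:=m_i+m_j+m_k-d>0$, I would subtract $k_{ijk}(2H-E_i-E_j-E_k)$ and verify that the residual divisor $D''$ still satisfies \eqref{1.5}--\eqref{6.5}, re-expressing each condition in terms of the new coefficients exactly as was done for \eqref{5.4} and \eqref{4.4} in the proof of Theorem~\ref{facets eff 4}. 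When all $k_{ijk}\le 0$, an explicit decomposition $D=\alpha H+\sum_i\beta_i(H-E_i)$ with non-negative coefficients concludes the argument.

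The main obstacle lies in the sufficiency direction: one must track how each elementary subtraction of a quadric class affects the twenty inequalities \eqref{4.5} and the five inequalities \eqref{5.5} simultaneously, which is an elementary but combinatorially heavy bookkeeping exercise requiring a careful choice of triple $\{i,j,k\}$ at each step. A secondary subtlety is confirming the covering property of the twisted cubic family $\mathcal{T}$ used in the proof of \eqref{6.5}; this reduces to an incidence-variety dimension count together with a non-emptiness check, which one can carry out by starting with the unique twisted cubic through six points placed two on each of $l_1,l_2,l_3$ and then varying those six points to impose the codimension-$4$ condition that the cubic also meet $l_4$ and $l_5$ each in two points.
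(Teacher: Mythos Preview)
Your necessity arguments are close in spirit to the paper's, with one simplification you missed: once you reduce to $m_i\ge 0$, inequality \eqref{4.5} is \emph{also} redundant (it follows from \eqref{5.5} since $m_j\ge 0$), so there is no need for the conic family $\mathcal{F}_{i,j}$ at all, and in the sufficiency step you would not have to track the twenty inequalities \eqref{4.5}. Your twisted-cubic approach to \eqref{6.5} is genuinely different from what the paper does: the paper never uses twisted cubics, but instead shows that a certain conic-bundle surface $S$ (fibred over the pencil of planes through $l_1$, with fibre the conic through $p_2,\dots,p_5$ and the trace $q_1$ of a transversal line) lies in the base locus of $|D|$ whenever \eqref{6.5} fails, and then iterates by replacing $D$ with $D-S$. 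Your route is shorter if the covering property of the bisecant twisted cubics can be established; the paper's route avoids that subtlety at the cost of an inductive reduction.

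There is, however, a genuine gap in your sufficiency argument. In the terminal case ``all $k_{ijk}\le 0$'' you claim an explicit decomposition $D=\alpha H+\sum_i\beta_i(H-E_i)$ with non-negative coefficients. Any such decomposition forces $d\ge\sum_i m_i$, but this does \emph{not} follow from the hypotheses: for instance $d=3$, $m_1=\cdots=m_5=\tfrac{9}{10}$ satisfies \eqref{1.5}--\eqref{6.5} (the last with equality) and has all triple sums equal to $\tfrac{27}{10}\le 3$, yet $\sum m_i=\tfrac{9}{2}>3$. So the cone generated by $H$ and the $H-E_i$ is strictly smaller than the slice of the effective cone you are trying to cover, and no such decomposition exists. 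The paper handles this terminal case by a different trick: assuming $m_5=\max_i m_i$, it sets
\[
T=(2H-E_1-E_2-E_5)+(2H-E_3-E_4-E_5)=4H-\textstyle\sum_iE_i-E_5,
\]
and shows that $D'':=2D-m_5T$ is an effective divisor on $X_4$ by checking the inequalities of Theorem~\ref{facets eff 4} (in particular, \eqref{4.4} for $D''$ is equivalent to \eqref{6.5} for $D$). Since $T$ is visibly effective, this gives effectivity of $2D$ and hence of $D$. You will need this (or an equivalent device involving the quadric classes) to close the gap.
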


\begin{proof}
We may assume that $m_i\ge0$, 
for all $i=1,2,3,4,5$. In this case \eqref{2.5} and \eqref{4.5} are redundant. 

We prove that the conditions \eqref{1.5}$,\dots,$\eqref{6.5} are necessary for a divisor $D$ on $\Bl_5\PP^3$ 
to be effective. 

If $D$ is effective, then \eqref{1.5} and \eqref{3.5} are satisfied by Theorem  
\ref{facets eff 3}.

The proof that $D\ge0$ implies  \eqref{5.5} is the same as the one for the inequality 
\eqref{5.4} in Theorem \ref{facets eff 4}, where we consider the pencil of conics in $\Delta$ 
passing 
to four points $p_2,p_3,p_4$ and $p_5:=l_5\cap\Delta$.

We now prove that $D\ge0$ implies \eqref{6.5}. 
If $m_i=0$ for some $i\in\{1,\dots,5\}$, we conclude by Theorem \ref{facets eff 4}; therefore we may assume $m_i>0$.
If $(m_1+\cdots+m_5)-m_i\le d$ for some $i\in\{1,\dots,5\}$, then
\eqref{6.5}  follows from \eqref{5.5}. 
Therefore we shall assume $(m_1+\cdots+m_5)-m_i> d$ for every $i$.
Assume by contradiction that \eqref{6.5} is not satisfied.
Let us consider the plane $\Delta$ containing $l_1$ (Figure \ref{fig:l_1_1}). 
\begin{figure}[H]
\centering
\begin{tikzpicture}[line cap=round,line join=round,x=1.0cm,y=1.0cm]
\node at (5.3,3.8) {$\Delta$};
\draw[color=gray, thick] (0,0) -- (4,0);
\draw[color=gray, thick]  (4, 0) -- (5,4);
\draw[color=gray, thick]  (5,4)-- (1,4);
\draw[color=gray, thick]  (1,4) --(0,0);

\draw[color=red, thick] (1.5, 3.5)--(3.5, .5);
\node at (1.35,3.5) {$l_1$};

\draw (1,2) node[circle,fill,inner sep=1.5pt,label=below:$p_4$, color=red]{};
\draw (1.5,1) node[circle,fill,inner sep=1.5pt,label=below:$p_2$, color=red]{};
\draw (3,3) node[circle,fill,inner sep=1.5pt,label=below:$p_3$, color=red]{};
\draw (2.3,3.5) node[circle,fill,inner sep=1.5pt,label=below:$p_5$, color=red]{};
\draw (4,3.5) node[circle,fill,inner sep=1.5pt,label=below:$q_1$, color=black!50!green]{};
\draw (3.5,1.5) node[circle,fill,inner sep=1.5pt,label=below:$q_1'$, color=black!50!green]{};
\end{tikzpicture}
\caption{$\Delta'_1$} \label{fig:l_1_1}
\end{figure}
Let $\Delta$, $t_i$ and $t'_i$ be as above and set
 $q_1:=t_1\cap\Delta$ and $q'_1:=t'_1\cap\Delta$.  
Assume $\Delta$ is general. Note that the set of points
$\{q_1,p_2,\dots,p_5\}$ is in general position in $\Delta$. If \eqref{6.5} is violated, then the strict transform in $\tilde{\Delta}$ of the unique conic in $\Delta$  passing through these five points is contained in the base locus of $|D|_{\tilde{\Delta}}|$ with multiplicity $(m_2+m_3+m_4+m_5-d)+m_2+m_3+m_4+m_5-2(d-m_1)=2(m_1+\cdots+m_5)-3d>0$.
Assume $\Delta$ contains one of the transversal lines to $l_1$,  say $t_2$. 
We interpret this as a degeneration of the conic to the union of the lines spanned by $\{q_1,p_2\}$ and $\{q'_1,p_2\}$ while the plane $\Delta$ specializes.
Each of these two lines is contained in the base locus with multiplicity 
$2(m_1+\cdots+m_5)-3d$. This shows that 
the conic bundle over $\PP^1$, having fibers these conics, 
is contained in the base locus of $D$ with multiplicity $2(m_1+\cdots+m_5)-3d$.
 It is a surface of degree $\delta\ge3$,  
that is linearly equivalent to the following divisor 
$$
S=\delta H-\sum_{j=2}^5 E_i.
$$
Consider the divisor 
\begin{align*}
D'=D-S&=(d-\delta)H-m_1E_1-\sum_{j=2}^5(m_j-1)E_j\\
 & :=d'H-\sum_{j=1}^5m'_jE_j.
\end{align*}
Since $S\subseteq\textrm{Bs}(|D|)$, one obtains that $D$ is effective if and only if $D'$ is effective. 
Moreover $d\ge \delta$. Now, we distinguish two cases.

Case 1. If $m'_j=0$ for some $j\in\{2,\dots,5\}$, then we can think of $D'$ as a divisor living on the space blown-up along up to four lines and conclude by Theorem \ref{facets eff 4}.

Case 2. Assume $m'_j>0$ for all $j\in{1,\dots,5}$.
We claim that $(m'_1+\cdots+m'_5)-m'_j>d'$ for every $j$. This is very easy to check if $j\in\{2,\dots,5\}$. If $j=1$, we compute
\begin{align*}
0&<(2(m_1+\cdots+m_5)-3d)+(3\delta-8)=2(m'_1+\cdots+m'_5)-3d'\\
&=((m'_1+\cdots+m'_5)+m_1-2d')+((m'_1+\cdots+m'_5)-m_1-d')\\
& \le ((m'_1+\cdots+m'_5)-m_1-d'),
\end{align*}
where the last inequality follows from the fact that $D'$ is effective, 
as we already proved that \eqref{5.5} is necessary condition for this. 
Moreover the first inequality, that just follows from the assumption that \eqref{6.5} 
is violated and the fact that $\delta\ge3$, shows that $S$ is contained in the base 
locus of $|D'|$. Hence we reduced $D$ to a divisor $D'$ which satisfies the same 
assumptions and that has smaller degree and multiplicities along the lines. 
We can proceed as above until we reduce to either a divisor with negative degree
 or to Case 1. This concludes this part of the proof.

\medskip

We now prove that a divisor $D$ satisfying all inequalities is effective.

Consider first of all the case $m_{i_1}+m_{i_2}+m_{i_3}\le d$, for all triples of pairwise distinct indices $\{i_1,i_2,i_3\}\subset\{1,\dots,5\}$.
 Modulo reordering the indices, if necessary, we may assume $m_5\ge m_i$ for $i=1,\dots,4$.
Set $$T:=4H-\sum_{i=1}^5E_i-E_5.$$ This divisor is effective, because we can write it as a sum of effective divisors  $T=(2H-E_1-E_2-E_5)+(2H-E_3-E_4-E_5)$. 
Consider the following difference:
$$D'':=2D-m_5T.$$
If $D''$ is effective we conclude, in fact when describing the effective cone we are only interested in effectivity up to numerical equivalence.
 Notice that $D''$ is a divisors that lives in $\Bl_4\PP^3$, the blow-up of $\PP^3$ along four lines, in fact we can write
$$
D''=(2d-4m_5)H-\sum_{i=1}^4 (2m_i-m_5)E_i:=d''H-\sum_{i=1}^4m''_i E_i.
$$
Notice that by assumption, $d''\ge0$,  $m''_i\le d''$ and $m''_i+m''_j\le d''$. To show that $D''$ is effective, it is enough to verify that it satisfies the inequality \eqref{4.4} of Theorem \ref{facets eff 4}; we conclude by noticing that\\
$$
2\sum_{i=1}^4m''_i- 3d''=2(2(m_1+\cdots+m_5)-3d),
$$
so that the inequality  $2\sum_{i=1}^4m''_i\le 3d''$ is equivalent to \eqref{6.5}.

Modulo reordering the indices if necessary, assume that $m_1+m_2+m_3>d$, namely that the quadric through
the lines $l_1,l_2,l_3$ is contained in the base locus of $D$ with multiplicity 
$k_{123}=m_1+m_2+m_3- d$, see Lemma \ref{base locus for quadric}.
It is enough to prove that 
$$D-k_{123}(2H-E_1-E_2-E_3)$$ is effective. To conclude it is enough to check that  it satisfies \eqref{1.5}$,\dots,$\eqref{6.5} and then to use the preceding case.  
We leave the details to the reader.

\end{proof}

\subsection{Extremal rays of the effective cones}

\begin{corollary}\label{rays eff 2}
The  extremal rays of $\Eff(X_2)$ are 
\begin{align*}
\cone(E_i), & \quad  1\le i\le 2,\\
\cone(H-E_i), & \quad 1\le i\le 2.
\end{align*}
\end{corollary}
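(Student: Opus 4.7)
The plan is to extract the extremal rays directly from the facet description of $\Eff(X_2)$ given by Theorem \ref{facets eff 3} with $s=2$. In coordinates $(d,m_1,m_2)$ on $\Pic(X_2)_\RR\cong\RR^3$, that result realizes $\Eff(X_2)$ as the full-dimensional closed rational polyhedral cone cut out by four linear inequalities, which I will label
\[
F_1\colon d\ge 0,\quad F_2\colon d\ge m_1,\quad F_3\colon d\ge m_2,\quad F_4\colon d\ge m_1+m_2.
\]
Since the cone sits in $\RR^3$, each extremal ray lies on at least two facet hyperplanes; a quick check shows that the normals of any three of $F_1,\dots,F_4$ are linearly independent, so in fact every extremal ray lies on exactly two facets. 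The search therefore reduces to inspecting the $\binom{4}{2}=6$ pairs.

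First I would check that the four proposed divisors $E_1=(0,-1,0)$, $E_2=(0,0,-1)$, $H-E_1=(1,1,0)$ and $H-E_2=(1,0,1)$ are effective (immediate) and verify by substitution that
\[
E_1\in F_1\cap F_3,\quad E_2\in F_1\cap F_2,\quad H-E_1\in F_2\cap F_4,\quad H-E_2\in F_3\cap F_4.
\]
Each of them therefore spans a one-dimensional face of $\Eff(X_2)$, hence an extremal ray.

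Next I would dispose of the two remaining facet pairs. The intersection $F_1\cap F_4$ is the line $(0,t,-t)$, on which $F_2$ forces $t\le 0$ and $F_3$ forces $t\ge 0$, leaving only the origin; similarly $F_2\cap F_3$ is the line $(t,t,t)$, on which $F_1$ forces $t\ge 0$ while $F_4$ forces $t\le 0$. These two facet pairs are thus non-adjacent and produce no further ray, so the four rays above exhaust $\Eff(X_2)$.

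The argument is essentially combinatorial once Theorem \ref{facets eff 3} is in hand, so there is no real obstacle; the only delicate point is distinguishing adjacent from non-adjacent facet pairs, and both sides of that dichotomy are settled by the brief substitutions above.
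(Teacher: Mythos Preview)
Your proof is correct and follows essentially the same approach as the paper: both extract the extremal rays from the facet description of $\Eff(X_2)$ given by Theorem \ref{facets eff 3}, verifying that each of $E_1,E_2,H-E_1,H-E_2$ lies on the intersection of two facet hyperplanes. The only minor difference is in how exhaustion is established: the paper first observes (from the proof of Theorem \ref{facets eff 3}) that these four divisors generate the cone and then checks extremality, whereas you instead enumerate all $\binom{4}{2}$ facet pairs and rule out the two non-adjacent ones directly. Your route avoids citing the generation statement at the cost of two extra substitutions; either way the content is the same.
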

\begin{proof}
The proof of Theorem \ref{facets eff 3} implies that every effective divisor on $X_2$
can be written as linear combination with positive coefficients of 
the effective divisors $E_i$ and $H-E_i$, for $i=1,2$. Hence these divisors form a set of generators
for the effective cone. 

We now show that each of these generators spans an extremal rays 
of the effective cone in $\Nn^1(X_2)_\mathbb{R}\cong\mathbb{R}^3$.
In order to do this we use the equations
describing the facets of the effective cone in Theorem \ref{facets eff 3} and we obtain:
\begin{align*}
\cone(E_1)&=\{d=0\}\cap\{m_2=0\},
\\
\cone(E_2)&=\{d=0\}\cap\{m_1=0\},
\\
\cone(H-E_1)&=\{m_1=d\}\cap\{m_1+m_2=d\},
\\
\cone(H-E_2)&=\{m_2=d\}\cap\{m_1+m_2=d\}.
\end{align*}
\end{proof}

\begin{corollary}\label{rays eff 3 4 5}
The extremal rays of $\Eff(X_s)$, $s=3,4,5$ are 
\begin{align*}
\cone(E_i), & \quad  1\le i\le s,\\
\cone(H-E_i), & \quad 1\le i\le s,\\
\cone(2H-E_{i_1}-E_{1_2}-E_{1_3}), & \quad 1\le i_1<i_2<i_3\le s.
\end{align*}
\end{corollary}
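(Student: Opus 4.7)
The plan is to argue in two steps, following the pattern of Corollary \ref{rays eff 2}: first check that the three families listed generate $\Eff(X_s)$, then verify that each spans a $1$-dimensional face (i.e.\ an extremal ray) of the effective cone.

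\textbf{Generation.} All three families consist of effective divisors: $E_i$ and $H-E_i$ are tautologically effective (the latter is the strict transform of a general hyperplane containing $l_i$), and $2H-E_{i_1}-E_{i_2}-E_{i_3}$ is realised by the quadric of Lemma \ref{base locus for quadric}. Conversely, the sufficiency directions in the proofs of Theorems \ref{facets eff 3}, \ref{facets eff 4}, \ref{facets eff 5} already express an arbitrary effective $D=dH-\sum m_iE_i$, after the preliminary reduction $D\leftrightarrow D+m_iE_i$ whenever $m_i<0$, as a non-negative $\QQ$-combination of precisely these three kinds of classes. Hence they span $\Eff(X_s)$.

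\textbf{Extremality.} Since $\dim\Nn^1(X_s)_{\mathbb{R}}=s+1$, a $1$-dimensional face is cut out by any $s$ linearly independent supporting hyperplanes. I would verify, using the facet inequalities of the relevant theorem, that each listed generator is such an intersection. For $\cone(E_i)$ one takes $d=0$ together with $m_j=d$ for every $j\ne i$, giving $s$ independent equations whose common solution is the $-E_i$-axis. For $\cone(H-E_i)$ one takes $m_i=d$ together with $m_i+m_j=d$ for every $j\ne i$, forcing $m_j=0$ ($j\ne i$) and $m_i=d$. For $\cone(2H-E_{i_1}-E_{i_2}-E_{i_3})$ the three facets $m_{i_a}+m_{i_b}=d$ from \eqref{3.3}/\eqref{3.4}/\eqref{3.5} already suffice when $s=3$; for $s=4$ one adds the facet $(m_1+\cdots+m_4)+m_{i_1}=2d$ from \eqref{5.4}, which forces $m_{i_4}=0$; for $s=5$ one adds the two facets of type \eqref{4.5}, namely $(m_1+\cdots+m_5)+m_{i_1}-m_j=2d$ for $j=i_4$ and $j=i_5$, which force $m_{i_4}=m_{i_5}=0$.

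\textbf{Expected main obstacle.} The only non-routine verification is the last case: many facets of $\Eff(X_5)$ pass through the ray $\cone(2H-E_{i_1}-E_{i_2}-E_{i_3})$ and one must select $s=5$ of them that are linearly independent. With the choice above the resulting $5\times 6$ coefficient matrix is brought to echelon form by a short sequence of elementary row operations, so independence is immediate; the analogous checks for the other rays and smaller values of $s$ reduce to inspection of similarly small linear systems.
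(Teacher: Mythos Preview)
Your proposal is correct and follows essentially the same two-step strategy as the paper: generation is read off from the sufficiency arguments of Theorems \ref{facets eff 3}--\ref{facets eff 5}, and extremality is checked by exhibiting each ray as the intersection of $s$ independent supporting hyperplanes drawn from the facet lists. The only differences are cosmetic: the paper, for $s=3,4$, writes each ray as an intersection of \emph{all} facets containing it (so the descriptions are over-determined), whereas you select a minimal independent subset; and the paper leaves the $s=5$ verification to the reader, while you carry it out explicitly using the inequalities \eqref{4.5} rather than \eqref{5.5}. Both choices are valid, and your more economical selection makes the linear-algebra check cleaner.
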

\begin{proof}
The proofs of Theorem \ref{facets eff 3},  
Theorem \ref{facets eff 4} 
and Theorem \ref{facets eff 5} respectively provide a recipe for writing $D$ 
as sum of positive
 multiples of the generators listed.

To see that these generators span the extremal rays of the effective cone, we argue as in the proof
of Corollary \ref{rays eff 2}.
For $s=3$, using Theorem \ref{facets eff 3}, we obtain the following description: 
\begin{align*}
\cone(E_{i_1})&=\{d=0\}\cap\bigcap_{i\ne i_1}\{m_i=d\}\cap\{m_1+m_2+m_3-m_{i_1}=d\},
\\
\cone(H-E_{i_1})&=\{m_{i_1}=d\}\cap\bigcap_{i\ne i_1}\{m_{i_1}+m_i=d\}.
\end{align*}
Each ray of the form $\cone(E_{i_1})$ (resp. $\cone(H-E_{i_1})$) is intersection of
four (resp. three) hyperplanes of $\Nn^1(X_3)_\mathbb{R}\cong\mathbb{R}^4$.

For $s=4$, using Theorem \ref{facets eff 4}, we obtain 
\begin{align*}
\cone(E_{i_1})=&\{d=0\}\cap\bigcap_{i\ne i_1}\{m_i=d\}\cap\bigcap_{i,j\ne i_1}\{m_i+m_j=d\},
\\
\cone(H-E_{i_1})=&\{m_{i_1}=d\}\cap\bigcap_{i\ne i_1}\{m_{i_1}+m_i=d\}\\
& 
\cap\{(m_1+\cdots+m_4)+m_{i_1}=2d\},\\
\cone(2H-E_{i_1}-E_{i_2}-E_{i_3})=&\bigcap_{i,j\in\{i_1,i_2,i_3\}}\{m_i+m_j=d\}\\
& \cap
\bigcap_{i\in \{i_1,i_2,i_3\}}\{(m_1+\cdots+m_4)+m_i=2d\}\\
 & 
\cap\{2(m_1+\cdots+m_4)=3d\}.
\end{align*}

We leave it to the reader to verify the statement for $s=5$, using Theorem \ref{facets eff 5}.
\end{proof}

\end{document}